\newcommand{\Gr}{Gr\"obner }
\newcommand{\cI}{\cal{I}}
\newcommand{\cR}{\cal{R}}
\newcommand{\Q}{\mathbb{Q}}
\newcommand{\Z}{\mathbb{Z}}
\newcommand{\K}{\cal{K}}
\newcommand{\N}{\mathbb{N}}
\newcommand{\M}{\cal{M}}
\newcommand{\R}{\mathbb{R}}
\newcommand{\lm}{\mathop{\mathrm{lm}}\nolimits}
\newcommand{\lt}{\mathop{\mathrm{lt}}\nolimits}
\newcommand{\lc}{\mathop{\mathrm{lc}}\nolimits}
\begin{document}

\mainmatter              % start of the contributions

\title{On Consistency of Finite Difference Approximations to the Navier-Stokes Equations}
\titlerunning{Consistency of Approximations to Navier--Stokes Equations}
\author{Pierluigi Amodio\inst{1},
Yuri Blinkov\inst{2},
Vladimir Gerdt\inst{3}  and
Roberto La Scala\inst{1}
}
\authorrunning{P. Amodio, Yu. Blinkov, V. Gerdt, and R. La Scala}

\institute{Department of Mathematics, University of Bari, Bari, Italy\\
 \email{\{pierluigi.amodio, roberto.lascala\}@uniba.it}
\and
Department of Mathematics and Mechanics, Saratov State University, Saratov, Russia\\
\email{BlinovUA@info.sgu.ru}
\and
Laboratory of Information Technologies, Joint Institute for Nuclear Research, Dubna, Russia\\
\email{gerdt@jinr.ru}
}
\vskip -1.0cm

\maketitle              % typeset the title of the contribution`
\tocauthor{
Pirluigi Amodio (University of Bari, Bari)
Yuri Blinkov (Saratov State University, Saratov)
Vladimir Gerdt (Joint Institute for Nuclear Research, Dubna)
Roberto La Scala (University of Bari, Bari)}

\begin{abstract}
In the given paper, we confront three finite difference
approximations to the Navier--Stokes equations for the
two-dimensional viscous incomressible fluid flows. Two of these
approximations were generated by the computer algebra assisted
method proposed based on the finite volume method, numerical
integration, and difference elimination. The third approximation
was derived by the standard replacement of the temporal
derivatives with the forward differences and the spatial
derivatives with the central differences. We prove that only one
of these approximations is strongly  consistent with the
Navier--Stokes equations and present our numerical tests which
show that this approximation has a better behavior than the other
two.
\end{abstract}

%\keywords{systems of partial differential equations, involution, Thomas decomposition, finite %difference approximations, consistency, difference standard bases, Navier-Stokes equations, %computer algebra}

%\begin{keyword}
%Gr\"obner bases, Comprehensive Gr\"obner systems, Montes {\sc DisPGB} algorithm, Involutive bases, Gerdt's algorithm.
%\end{keyword}

%\footnotetext{V.P. Gerdt et al. (Eds.): CASC 2013, LNCS 8136, pp. 46–60, 2013. \\ Springer International Publishing Switzerland 2013}

\section{Introduction}

By its completion to involution~\cite{Seiler'10}, the well-known
Navier--Stokes system of equations~\cite{Pozrikidis:2001} for
unsteady two-dimensional motion of incompressible viscous liquid
of constant viscosity may be written in the following
dimensionless form~\cite{GB'09}
\begin{eqnarray}
\left\lbrace
\begin{array}{l}
f_1 := u_x+v_y=0\,,\\[0.1cm]
f_2 := u_t + u u_x + v u_y + p_x - \frac{1}{\mathrm{Re}} (u_{xx}+u_{yy})=0\,,\\[0.1cm]
f_3 := v_t + u v_x + v v_y + p_y - \frac{1}{\mathrm{Re}} (v_{xx}+v_{yy})=0\,,\\[0.1cm]
f_4 := u_x^2 + 2 v_x u_y + v_y^2 + p_{xx} + p_{yy} = 0 \,.
\end{array}
\right.\label{ins}
\end{eqnarray}
Here $(u,v)$ is the velocity field, $f_1$ is the continuity
equation, $f_2$ and $f_3$ are the proper Navier--Stokes
equations~\cite{Pozrikidis:2001}, and $f_4$ is the pressure
Poisson equation~\cite{Gresho:1987}. The constant $\mathrm{Re}$
denotes the Reynolds number.

For discretization we use the finite difference
method~\cite{Samarskii'01,Str'04} and consider orthogonal and
uniform computational grid. In this method, the finite difference
approximation (FDA) to the differential equations combined with
appropriate initial or/and boundary conditions in their discrete
form constitutes the finite difference scheme (FDS) for
construction of a numerical solution. The main requirement to the
scheme is convergence of its numerical solution to the solution of
differential equation(s) when the grid spacings go to zero.

The fundamental problem in numerical solving of partial
differential equation (PDE) or a system of PDEs is to construct
such FDA that for any initial- or/and boundary-value problem,
providing existence and uniqueness of the solution to PDE(s) with
a smooth dependence on the initial or/and boundary data, the
corresponding FDS is convergent. For polynomially-nonlinear PDEs,
e.g., the Navier--Stokes equations, to satisfy this requirement
FDA must inherit all algebraic properties of the differential
equation(s). The necessary condition for the inheritance is the
property of $s$(strong)-consistency of FDA to PDEs introduced first
in~\cite{GR'10} for linear equations and extended in~\cite{G'12}
to nonlinear ones.

The conventional consistency~\cite{Str'04}, called
in~\cite{GR'10,G'12} by weak consistency ($w-$ consistency) implies
reduction of FDA to the original PDE(s) when the grid spacings go
to zero. This consistency can be verified by a Taylor expansion of the
difference equations in the FDA about a grid point. The strong
consistency ($s-$consistency) implies reduction of any element in the perfect
difference ideal generated by the FDA to an element in the radical
differential ideal generated by the PDE(s). In \cite{G'12}, it was
shown that $s-$consistency can be checked in terms of a difference
\Gr basis of the ideal generated by the FDA. Since difference
polynomial ring~\cite{Levin'08} is non Noetherian,  in the
nonlinear case, generally, one cannot verify $s-$consistency of a
given FDA through computation of associated difference \Gr basis.
However, if the FDA under consideration is $w-$consistent, then it
is not $s-$consistent if and only if at some  step of the
Buchberger-like algorithm (cf.~\cite{L'12,GLS'13} and~\cite{G'12})
applied to construction of the \Gr basis,  a  difference
S-polynomial arises which in not $w-$consistent with any of the
consequences of the original PDE(s). In practice, this may help to
detect $s-$inconsistency.

In \cite{GB'09}, the algorithmic approach to generation of FDA
suggested in~\cite{GBM'06} was applied to the Navier--Stokes
equations (\ref{ins}). The approach is based on the finite volume
method combined with numerical integration and difference
elimination. As a result, three different $w-$consistent FDAs were
obtained in \cite{GB'09}. Two of them were analyzed in~\cite{G'12}
from the viewpoint of $s-$consistency. One of these FDAs was
qualified as a "good" one, i.e., $s-$consistent, by the claim that
it itself is a \Gr basis. Another FDA was qualified as
s-inconsistent by inspection (observed already in~\cite{GB'09})
that one of its differential consequences is not reduced to a
differential consequence of the system (\ref{ins}) when the grid
spacings go to zero. However, as explicit computation with the
Maple-based implementation~\cite{L'12} of the Buchberger-like
algorithm~\cite{L'12,GLS'13} showed, the ``good'' FDA is not a \Gr
basis what generates a need for the further investigation of its
s-consistency.

In this paper, we prove that the "good" FDA generated
in~\cite{GB'09} is indeed $s-$consistent. In doing so we avoid the
\Gr basis computation what is rather cumbersome. In addition, we
consider universally adopted standard method to discretization,
which consists in the replacement of the temporal derivatives in
(\ref{ins}) with the forward differences and the spatial
derivatives with the central differences, and show that it yields
FDA which is not $s-$consistent. To see numerical impact of the
property of s-consistency we confronted the three FDAs and
compared their behavior for the mixed initial-boundary value
problem whose data originate from the exact solution~\cite{Chorin}
of (\ref{ins}). This comparison clearly shows superiority of the
$s-$consistent FDA over the other two which are not $s-$consistent.

This paper is organized as follows. Section 2 introduces the main
objects of difference algebra related to discretization of
(\ref{ins}). Section 3 is concerned with definition of FDA to
(\ref{ins}) and its s-consistency. In Section 4, we consider three
particular FDAs to the Navier--Stokes system (\ref{ins}) and
establish their s-consistency properties. Section 5 presents the
results of our numerical computer experiments. Some concluding
remarks are given in Section 6.

\section{Preliminaries}

The left-hand sides of the PDEs in the Navier--Stokes system
(\ref{ins}) can be considered as elements in the {\em differential
polynomial ring}~\cite{Hubert'01}
\begin{equation}
f_i=0\ (1\leq i\leq 4),\quad F:=\{f_1,f_2,f_3,f_4\}\subset R:={\K}[u,v,p]\,,  \label{pde}
\end{equation}
where $\{u,v,p\}$ is the set of {\em differential indeterminates} and
${\K}:={\Q}({\mathrm{Re}})$ is the {\em differential field of constants}.

\begin{remark}
It is easy to check that the differential ideal $[F]\subset R$ generated by $F$ is {\em radical}~\cite{Hubert'01}.
\end{remark}

To approximate the differential system (\ref{pde}) by a difference
one, we use an orthogonal and uniform computational grid (mesh) as
the set of points $(jh,kh,n\tau)\in \R^3$.  Here $\tau>0$ and
$h>0$ are the grid spacings (mesh steps), and the triple of
integers $(j,k,n)\in \Z^3$ numerates the grid points. In doing so,
in a grid node $(jh,kh,n\tau)$ a solution to (\ref{ins}) is
approximated by the triple of grid functions
\begin{equation}
\{u^n_{j,k},v^n_{j,k},p^n_{j,k}\}:=\{u,v,p\}\mid_{x=jh,y=kh,t=\tau n}.\label{grid_functions}
\end{equation}

Now we introduce the set of mutually commuting {\em differences}
$\{\sigma_x,\sigma_y,\sigma_t\}$ acting on a grid function
$\phi(x,y,t)$, which is to approximate a solution of (\ref{ins})
on the grid points, as the forward shift operators
\begin{eqnarray}
\left\lbrace
\begin{array}{l}
\sigma_x\circ \phi(x,y,t)=\phi(x+h,y,t)\,,\\[0.1cm]
\sigma_y\circ \phi(x,y,t)=\phi(x,y+h,t)\,,\\[0.1cm]
\sigma_t\circ \phi(x,y,t)=\phi(x,y,t+\tau)\,.
\end{array}
\right.\label{shift}
\end{eqnarray}

The {\em monoid} generated by the differences will be denoted by
$\Sigma$, i.e.,
\[
\Sigma:=\{\,\sigma_x^{i_1}\sigma_y^{i_2}\sigma_t^{i_3}\mid i_1,i_2,i_3\in \N_{\geq 0}\,\}\,,\quad
(\,\forall \sigma\in \Sigma\,)\ [\,\sigma \circ 1=1\,]\,,
\]
and the ring of {\em difference polynomials} over ${\K}$ will be
denoted by ${\cR}$. The elements in ${\cR}$ are polynomials in the
{\em difference indeterminates} $u,v,p$ (dependent variables)
defined on the grid points and in their shifted values
\[
\{\ \sigma_x^{i_1}\sigma_y^{i_2}\sigma_t^{i_3}\circ w\mid w\in \{u,v,p\},\ \{i_1,i_2,i_3\}\in \N_{\geq 0}^3\ \}\,.
\]

\begin{definition}{\em\cite{Levin'08}}
A total order $\prec$ on $\{\,\sigma\circ w\mid \sigma \in
\Sigma,\ w\in \{u,v,p\}\,\}$ is {\em ranking} if for all
$\sigma,\sigma_1,\sigma_2,\sigma_3\in \Sigma$ and $w,w_1,w_2\in
\{u,v,p\}$
\[
\sigma\,\sigma_1\,\circ w \succ \sigma_1\circ w\,, \quad
\sigma_1 \circ w_1 \succ \sigma_2\circ w_2\,
\Longleftrightarrow \, \sigma\circ \sigma_1\circ w_1 \succ \sigma\circ \sigma_2\circ w_2
\]
\label{def-diffc-ranking}
\end{definition}
The set ${\M}$ of {\em monomials} in the ring ${\cR}$ reads
\begin{equation}
{\M}:=\{\,(\sigma_1\circ u)^{i_1}(\sigma_2\circ v)^{i_2}(\sigma_3\circ p)^{i_3}\mid \sigma_j\in \Sigma,\ i_j\in \N_{\geq 0},\ 1\leq j\leq 3\,\}\,. \label{monomials}
\end{equation}

\begin{definition}{\em\cite{G'12}}
A total order $\succ$ on ${\M}$ is {\em admissible} if it extends a ranking and
\[
(\forall\, \mu\in {\M}\setminus \{1\})\, [\mu\succ 1]\, \wedge\,
(\,\forall\, \sigma\in \Sigma)\, (\,\forall\, \mu,a,b\in {\M}\,)\, [\,a\succ b\Longleftrightarrow \mu\cdot\sigma\circ a\succ  \mu\cdot \sigma\circ b\,]\,.
\]
\label{mon-order}
\end{definition}

Given an admissible monomial order $\succ$, every difference
polynomial $p$ has the {\em leading monomial} $\lm(p)\in {\M}$ and
the {\em leading term} $\lt(p):=\lm(p)\lc(p)$  with the {\em
leading coefficient} $\lc(p)$. Throughout this session, every
difference polynomial is to be {\em normalized (i.e., monic)} by
division of the polynomial by its leading coefficient. This
provides  $(\,\forall p\in {\cR}\,)\ [\,\lc(p)=1\,]$.

Now we consider the notions of difference ideal~\cite{Levin'08}
and its standard basis. The last notion was introduced
in~\cite{G'12} in the full analogy to that in differential
algebra~\cite{Ollivier'90}.

\begin{definition}{\em\cite{Levin'08}} A set ${\cI}\subset {\cR}$ is {\em difference polynomial ideal}  or {\em $\sigma$-ideal} if
\[
(\,\forall\, a,b\in {\cI}\,)\ (\,\forall\, c\in {\cR}\,)\  (\,\forall\, \sigma \in \Sigma\,)\ [\,
a+b\in {\cI},\ a\cdot c\in {\cI},\ \sigma\circ a\in {\cI}\,].
\]
If $F\subset {\cR}$, then the smallest $\sigma$-ideal containing $F$ is said to be generated by $F$ and denoted by $[F]$.
\label{diffc-ideal}
\end{definition}

\begin{definition}{\em\cite{G'12}}
If for $\alpha,\beta\in {\M}$ the equality $\beta=\mu\cdot
\sigma\circ \alpha$ holds  with $\sigma\in \Sigma$ and $\mu\in
{\M}$ we shall say that $\alpha$ {\em divides} $\beta$ and write
$\alpha\mid \beta$. It is easy to see that this divisibility
relation yields a {\em partial order}. \label{division}
\end{definition}

\begin{definition}{\em\cite{G'12}} Given a $\sigma$-ideal ${\cI}$ and an admissible
monomial ordering $\succ$, a subset $G\subset {\cI}$ is its {\em (difference) standard basis} if $[G]={\cI}$ and
\[
(\,\forall\, p\in {\cI}\,) (\,\exists\, g\in G\,)\ \ [\,\lm(g)\mid \lm(p)\,]\,.
\]
If the standard basis is finite we shall call it {\em \Gr basis}.
\label{def_SB}
\end{definition}

\begin{remark}
Based on Definition~\ref{division}, one can introduce (see~\cite{G'12}) in difference
algebra the concepts of {\em polynomial reduction} and {\em normal form} of a difference
polynomial $p$ modulo a set of difference polynomials $P$ (notation: $\mathrm{NF}(p,P)$).
A {\em reduced standard basis} $G$ is such that $(\,\forall g\in G)\ [\,g=NF(g,G\setminus \{g\})\,].$
\end{remark}
The algorithmic characterization of standard bases and their construction in difference
polynomial rings is done in terms of difference $S$-polynomials.

\begin{definition}{\em\cite{G'12}}
Given an admissible order, and monic difference polynomials $p$
and $q$ (they not need to be  distinct), the polynomial
$S(p,q):=m_1\cdot \sigma_1\circ p-m_2\cdot \sigma_2\circ q$ is
called (difference){\em $S$-polynomial} associated to $p$ and $q$
if $ m_1\cdot \sigma_1\circ  \lm(p)=m_2\cdot \sigma_2\circ \lm(q)$
with co-prime $m_1\cdot \sigma_1$ and $m_2\cdot \sigma_2$.
\label{S-polynomial}
\end{definition}

\begin{remark}
This characterization immediately implies~\cite{G'12,GLS'13} a
difference version of the Buchberger algorithm
(cf.~\cite{CLO'07,Ollivier'90}). The algorithm always terminates
when the input polynomials are linear. If this is not the case,
the algorithm may not terminate. Additionally, one can take into
account Buchberger's criteria to avoid some useless zero
reductions. The difference  criteria are similar to the
differential ones~\cite{Ollivier'90}.
\end{remark}

\begin{definition}{\em\cite{Levin'08}} A {\em perfect difference ideal} generated
by a set $F\subset {\cal{R}}$ and denoted by $\llbracket F\rrbracket$ is
the smallest difference ideal containing $F$ and such that for any
$f\in {\cal{R}}$, $\sigma_1,\ldots,\sigma_r\in \Sigma$ and $k_1,\ldots,k_r \in \N_{\geq 0}$
\[
(\sigma_1\circ f)^{k_1}\cdots (\sigma_r\circ f)^{k_r}\in \llbracket
F\rrbracket \Longrightarrow f\in \llbracket F\rrbracket \,.
\]
\end{definition}

\begin{remark}
In difference algebra, perfect ideals play the same role
(cf.~\cite{Trushin'11}) as radical ideals in commutative and
differential algebra. Obviously, $[F]\subseteq \llbracket
F\rrbracket$.
\end{remark}

\section{Consistency of Difference Approximations}

Let a finite set of difference polynomials
\begin{equation}
\tilde{f}_1=\cdots=\tilde{f}_p=0\,,\quad \tilde{F}:=\{\tilde{f}_1,\ldots \tilde{f}_p\}\subset {\cR} \label{fda}
\end{equation}
be a FDA to (\ref{ins}). It should be noted that generally the
number $p$ in (\ref{fda}) needs not to be equal to the number of
equations in (\ref{ins}).

\begin{definition}
A differential (resp. difference) polynomial $f\in {R}$ (resp.
$\tilde{f}\in {\cR}$) is {\em differential-algebraic} (resp. {\em
difference-algebraic}) {\em consequence} of (\ref{ins}) (resp.
(\ref{fda})) if $f\in \llbracket F\rrbracket $ (resp.
$\tilde{f}\in \llbracket \tilde{F}\rrbracket $).
\end{definition}

\begin{definition}\label{implication} We shall say that a {\em difference equation
$\tilde{f}=0$ implies (in the continuous limit) the differential equation $f=0$} and write
$\tilde{f}\rhd f$ if $f$ does not contain the grid spacings $h,\tau$ and the Taylor expansion
about a grid point $(u^n_{j,k},v^n_{j,k},p^n_{j,k})$ transforms equation $\tilde{f}=0$ into
$f + O(h,\tau)=0$ where $O(h,\tau)$ denotes expression which vanishes when $h$ and $\tau$ go to zero.
\end{definition}

\begin{definition}{\em\cite{G'12,GR'10}}
The difference approximation (\ref{fda}) to (\ref{ins}) is {\em weakly consistent} or
{\em $w-$consistent} with (\ref{ins}) if $p=4$ and
\begin{equation}
   (\,\forall \tilde{f}\in \tilde{F}\,)\ (\,\exists f\in F\,)\ [\,\tilde{f}\rhd f\,]\,. \label{w-cons}
\end{equation}
\label{def-econs}
\end{definition}
The requirement of weak consistency which has been universally accepted in the literature,
is not satisfactory by the following two reasons:
\begin{enumerate}
 \item The cardinality of FDA in (\ref{FDA}) may be different from that of the original set
 of differential equations. For example, the systems $ \{\,u_{xz} + yu = 0,\ u_{yw} + z u = 0\,\}$ and
     $\{\,yu_y-zu_z=0,\ u_x - u_w=0,\ u_{xw} +yu_y=0\,\}$ in one dependent and four dependent variables
     are fully equivalent (see~\cite{GR'10}, Example 3). Thus, to construct a FDA, one can use them
     interchangeably. Whereas Definition \ref{def-econs} fastens $\tilde{F}$ to $F$.
 \item A $w-$consistent FDA may not be good in view of inheritance of properties of differential
 systems at the discrete level. We shall demonstrate this in the next section.
 \end{enumerate}

Another concept of consistency was introduced in~\cite{GR'10} for
linear FDA and then extended in~\cite{G'12} to the nonlinear case.
For the Navier--Stokes system, it is specialized as follows.

\begin{definition}
An FDA (\ref{fda}) to (\ref{ins}) is {\em strongly consistent} or {\em $s-$consistent} if
\begin{equation}
(\,\forall \tilde{f}\in \llbracket \tilde{F} \rrbracket\,)\  (\,\exists
f\in [F]\,)\ [\,\tilde{f}\rhd f\,]\,. \label{s-cond}
\end{equation}
\label{def-scon}
\end{definition}

The algorithmic approach of paper~\cite{G'12} to verification of s-consistency is based on the following theorem.

\begin{theorem}{\em\cite{G'12}}
A difference approximation (\ref{fda}) to (\ref{ins}) is $s-$consistent if and only
if a (reduced) standard basis $G$ of the difference
ideal $[\tilde{F}]$ satisfies
\begin{equation}
(\,\forall g\in G\,)\ (\,\exists f\in [F]\,)\  [\,g\rhd f\,]\,. \label{cons-gb}
\end{equation}
\label{th:s-cons}
\end{theorem}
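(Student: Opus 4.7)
The plan is to prove the two implications separately. The forward direction, s-consistency $\Longrightarrow$ (\ref{cons-gb}), is essentially immediate: the reduced standard basis $G$ lies inside $[\tilde{F}]\subseteq \llbracket \tilde{F}\rrbracket$, so the condition (\ref{s-cond}) applied to each $g\in G$ yields directly an $f\in [F]$ with $g\rhd f$.

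For the converse, assume that for every $g\in G$ there exists $f_g\in [F]$ with $g\rhd f_g$, and introduce the auxiliary set
\[
I := \{\,\tilde{f}\in \cR \mid \exists\, f\in [F],\ \tilde{f}\rhd f\,\}
\]
of difference polynomials whose continuous limit lies in the differential ideal $[F]$. My goal is to prove that $I$ is a \emph{perfect} difference ideal containing $\tilde{F}$, for then $\llbracket \tilde{F}\rrbracket\subseteq I$, which is exactly (\ref{s-cond}). First I would verify that $I$ is a difference ideal: Taylor expansion about a grid point is additive, sends a product $c\cdot \tilde{f}$ with $c\in \cR$ to $c' f + O(h,\tau)$ where $c'\in R$ is the continuous limit of $c$, and a shift $\sigma\in \Sigma$ merely relocates the base point of the expansion while leaving the principal part unchanged. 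Next, since $G$ is a standard basis of $[\tilde{F}]$, any $\tilde{f}\in [\tilde{F}]$ admits, by iterated difference reduction modulo $G$, a representation $\tilde{f}=\sum_i c_i\,\sigma_i\circ g_i$ with $c_i\in \cR$, $\sigma_i\in \Sigma$, $g_i\in G$; expanding termwise and using $g_i\rhd f_{g_i}\in [F]$ yields $\tilde{f}\rhd \sum_i c_i' f_{g_i}\in [F]$, so that $[\tilde{F}]\subseteq I$.

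The crucial and hardest step is to verify the perfection axiom for $I$. Let $f_*\in R$ denote the principal (i.e.\ $h^0\tau^0$) part of the Taylor expansion of $\tilde{f}$; the same $f_*$ is the principal part of every $\sigma_i\circ \tilde{f}$, so the principal part of the product $(\sigma_1\circ \tilde{f})^{k_1}\cdots(\sigma_r\circ \tilde{f})^{k_r}$ equals $f_*^{K}$ with $K=k_1+\cdots+k_r$. If this product belongs to $I$, then by uniqueness of the principal part its associated element of $[F]$ must be exactly $f_*^K$, so $f_*^K\in [F]$. Here I would invoke Remark~1, which asserts that the differential ideal $[F]$ is radical; this forces $f_*\in [F]$, hence $\tilde{f}\rhd f_*\in [F]$ and $\tilde{f}\in I$. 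The principal obstacle is precisely this perfection step: without the radicality of $[F]$ noted in Remark~1 one cannot pass from $f_*^K\in [F]$ to $f_*\in [F]$, so the bridge from $[\tilde{F}]$ up to $\llbracket \tilde{F}\rrbracket$ would fail and the equivalence would break down.
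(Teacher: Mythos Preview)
The paper does not supply its own proof of this theorem: it is quoted from~\cite{G'12} and used as a black box for the subsequent analysis of the three FDAs. Your reconstruction is therefore not comparable to anything in the present paper, but it is a sound argument in its own right and matches the spirit of the proof in the cited source.

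The forward direction is indeed immediate. For the converse, your strategy of showing that the set $I=\{\tilde f\in\cR\mid\exists f\in[F],\ \tilde f\rhd f\}$ is a perfect $\sigma$-ideal containing $G$, whence $\llbracket\tilde F\rrbracket\subseteq I$, is the natural one, and you correctly isolate the two essential ingredients: additivity and shift-stability of the continuous-limit map for the ideal property, and radicality of $[F]$ (Remark~1) for the perfection property. One small redundancy: once you know $I$ is a $\sigma$-ideal and $G\subseteq I$, the inclusion $[\tilde F]=[G]\subseteq I$ follows from the generation clause in Definition~\ref{def_SB} alone; the standard-basis representation via iterated reduction is not actually needed at that step. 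A mild caveat, inherited from the paper rather than introduced by you, is that your argument assumes every $\tilde f\in\cR$ has a well-defined principal part free of $h,\tau$; this is clean when the coefficient field is $\K=\Q(\mathrm{Re})$ as declared in Section~2, but the concrete FDAs carry coefficients in $h^{-1},\tau^{-1}$, a tension already present in the ambient setup.
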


Irrespective of possible infiniteness of the (nonlinear)
difference standard basis $G$, it may be useful to apply an
algorithm for its construction (see, for example, the algorithms
in~\cite{G'12,GLS'13}) and to verify s-consistency of the
intermediate polynomials. In doing so, one should check first the
$w-$consistency of the polynomials in the input FDA. Then, if the
normal form $\tilde{p}$ of an $S-$polynomial modulo the current
basis is nonzero, then before insertion of $\tilde{p}$ into the
intermediate basis one has to construct $p$ such that
$\tilde{p}\,\rhd p$ and check the condition $p\in \llbracket
F\rrbracket$.

\begin{remark}
Given a differential polynomial $f\in R$, one can algorithmically check
its membership in $\llbracket F\rrbracket$ by performing the involutive Janet reduction~\cite{G'12}.
\end{remark}

\section{Three Difference Approximations to the Navier--Stokes Equations}
To analyze strong consistency of difference approximations to (\ref{ins}) we shall need the following statements.

\begin{proposition}\label{proposition}
Let $\tilde{f}\in R$ be a difference polynomial. Suppose
$\tilde{f}\rhd f$ where $f$ is a differential-algebraic
consequence of the Navier--Stokes system (\ref{ins}), $f\in [F]$.
Then a finite sum of the form
\begin{equation}
 \tilde{p}:=\sum_i \tilde{g}_i\cdot \sigma_i\circ \tilde{f},\quad \sigma_i\in \Sigma,\quad \tilde{g}_i\in {\cR} \label{d_pol}
\end{equation}
also implies a differential-algebraic consequence of (\ref{ins}).
\end{proposition}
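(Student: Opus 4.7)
The plan is to track what the Taylor expansion about a grid point does to the three operations used to build $\tilde{p}$ out of $\tilde{f}$: shifting by an element of $\Sigma$, multiplication by a difference polynomial $\tilde{g}_i\in\cR$, and finite summation. If each of these operations preserves the relation $\rhd$ in a controlled way, then an appropriate $p\in[F]$ can be written down directly and the proposition follows.

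First I would verify that $\sigma\circ\tilde{f}\rhd f$ for every $\sigma\in\Sigma$. Evaluating $\sigma\circ\tilde{f}$ on the triple of grid functions (\ref{grid_functions}) is the same as evaluating $\tilde{f}$ at a node shifted by $O(h,\tau)$, so its Taylor expansion about the original node differs from that of $\tilde{f}$ only by terms of positive order in $h,\tau$, and by hypothesis equals $f+O(h,\tau)$. Next, for an arbitrary $\tilde{g}\in\cR$ I would observe that $\tilde{g}\rhd g$, where $g\in R$ is the polynomial in $u,v,p$ obtained by the substitution $\sigma\circ w\mapsto w$ for every shifted indeterminate occurring in $\tilde{g}$; this follows directly by expanding each $\sigma\circ w$ as $w+O(h,\tau)$. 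Combining these two facts and using that Taylor expansion is compatible with products and finite sums, I arrive at
\begin{equation*}
\tilde{p}=\sum_i \tilde{g}_i\cdot\sigma_i\circ\tilde{f}=\Bigl(\sum_i g_i\Bigr)\cdot f+O(h,\tau),
\end{equation*}
so $\tilde{p}\rhd p$ with $p:=\bigl(\sum_i g_i\bigr)\cdot f$. Since $f\in[F]$ by hypothesis and $[F]$ is a differential ideal of $R$, closed under multiplication by the element $\sum_i g_i\in R$, we get $p\in[F]$, as required.

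I do not expect a serious obstacle here; the argument is essentially bookkeeping. The one step deserving explicit care is the claim that the $\rhd$ relation is compatible with shifts, products, and finite sums, i.e.\ that the operation of Taylor-expanding a difference polynomial about a grid point and retaining the zeroth-order term is a ring homomorphism $\cR\to R$. Once this compositional property is recorded, the conclusion of the proposition reduces to the closure of $[F]$ under multiplication by elements of $R$.
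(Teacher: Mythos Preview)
Your argument is correct and follows essentially the same route as the paper: both proofs Taylor-expand $\tilde{p}$ about a grid point and observe that the continuous limit lies in $[F]$. The only nuance is that you identify the limit explicitly as $\bigl(\sum_i g_i\bigr)\cdot f$ and invoke merely the ideal closure of $[F]$, whereas the paper writes the limit in the looser form $\sum_\mu b_\mu\,\partial^\mu f$ and thereby appeals to the differential closure of $[F]$ as well; your formulation is the sharper of the two.
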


\begin{proof}
The shift operators in (\ref{shift}) are expanded in the Taylor series as follows
\[
 \sigma_{x}=\sum_{k\geq 0}\frac{h^k}{k!}\partial^k_{x},\quad \sigma_{y}=
 \sum_{k\geq 0}\frac{h^k}{k!}\partial^k_{y},\quad \sigma_t=\sum_{k\geq 0}\frac{\tau^k}{k!}\partial^k_t\,.
\]
By the Taylor expansion over a grid point, in the limit when $h$
and $\tau$ go to zero, the right-hand side of (\ref{d_pol})
becomes differential polynomial of the form
\[
 p:=\sum_{\mu}b_\mu \partial^\mu\circ f,\quad b_\mu\in R\,,\quad \partial^\mu\in \{\,\partial_x^i\partial_y^j\partial_t^k\mid i,j,k\in \N_{\geq 0}\,\}.
\]
Thus, $\tilde{p}\,\rhd p\in [F]$. $\Box$
\end{proof}
\begin{corollary}\label{corollary}
Let $\tilde{F}$ be a FDA (\ref{fda}) to (\ref{ins}) and $\succ$ be an admissible order on the monomial set (\ref{monomials}). Suppose
$
(\,\forall \tilde{f}\in \tilde{F}\,)\ \big[\,\tilde{f}\rhd f\in [F]\big]\,.
$
Then, every element $\tilde{p}$ in the difference ideal $[\tilde{F}]$ that admits the representation
\begin{equation}\label{member}
\tilde{q}:=\sum_{k=1}^p\sum_i \tilde{g}_{i,k}\cdot \sigma_i\circ
\tilde{f}_k,\quad \sigma_i\in \Sigma,\quad \tilde{g}_{i,k}\in {\cR}\,,
\end{equation}
where the leading terms of the polynomials in $\sum_i
\tilde{g}_{i,k}\cdot \sigma_i\circ \tilde{f_k}$ do not cancel out,
satisfies $\tilde{q}\,\rhd q\in [F]$.
\end{corollary}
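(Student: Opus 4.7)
The plan is to reduce the statement to a $k$-by-$k$ application of Proposition~\ref{proposition} and then to recombine the pieces using linearity of the Taylor expansion. For each index $k \in \{1,\ldots,p\}$, set
\[
\tilde{p}_k := \sum_i \tilde{g}_{i,k}\cdot \sigma_i\circ \tilde{f}_k,
\]
so that $\tilde{q} = \sum_{k=1}^p \tilde{p}_k$. The hypothesis of the corollary supplies, for each $k$, a differential polynomial $f_k \in [F]$ with $\tilde{f}_k \rhd f_k$, which sets up Proposition~\ref{proposition} for use on each individual piece.

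The first step is a direct invocation of Proposition~\ref{proposition} on each $\tilde{p}_k$. This yields a differential polynomial $q_k = \sum_\mu b_{\mu,k}\, \partial^\mu \circ f_k$ with $\tilde{p}_k \rhd q_k$, and $q_k \in [F]$ because the differential ideal $[F]$ is stable under the partial derivative operators $\partial_x,\partial_y,\partial_t$. The non-cancellation hypothesis on the leading terms inside each inner sum plays its role precisely here: it guarantees that the Taylor expansion of $\tilde{p}_k$ about a grid point produces a genuine dominant differential term, namely $q_k$, rather than a cancelled expression whose true continuous limit would be recovered only after rescaling by a power of $h$ or $\tau$ not accounted for in the construction in the proof of Proposition~\ref{proposition}.

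The second step assembles the pieces. Writing $\tilde{p}_k = q_k + R_k(h,\tau)$ with $R_k = O(h,\tau)$ for each $k$ and summing over $k$, linearity of the Taylor expansion gives
\[
\tilde{q} \;=\; \sum_{k=1}^p q_k \;+\; \sum_{k=1}^p R_k(h,\tau) \;=\; q + O(h,\tau), \qquad q := \sum_{k=1}^p q_k.
\]
Closure of the differential ideal $[F]$ under finite sums yields $q \in [F]$, and Definition~\ref{implication} then delivers the desired conclusion $\tilde{q} \rhd q \in [F]$.

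The main obstacle, in my view, lies in the bookkeeping associated with the non-cancellation clause: one must verify that it rules out exactly the degenerate scenario in which leading terms of $\tilde{g}_{i,k}\cdot \sigma_i\circ \tilde{f}_k$ annihilate each other at top order, so that the $q_k$ extracted by the Taylor-expansion procedure of Proposition~\ref{proposition} is indeed the faithful continuous limit of $\tilde{p}_k$ rather than a spurious one. Once this is clean, the remainder of the argument is routine manipulation of Taylor series and of the definitions of $\rhd$ and of the differential ideal $[F]$.
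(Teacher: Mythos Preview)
Your approach matches the paper's: apply Proposition~\ref{proposition} to each inner sum $\tilde{p}_k:=\sum_i\tilde{g}_{i,k}\cdot\sigma_i\circ\tilde{f}_k$ to obtain a continuous limit $p_k\in[F]$, then sum over $k$ by linearity of the Taylor expansion to conclude $\tilde{q}\rhd\sum_k p_k\in[F]$. The paper's proof consists of exactly these two steps, stated in two sentences.

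The one discrepancy is where you locate the no-cancellation hypothesis. You place it \emph{inside} each inner sum, as a guard ensuring that Proposition~\ref{proposition} yields a genuine $q_k$ rather than a degenerate one requiring rescaling. The paper, by contrast, obtains each $p_k\in[F]$ from Proposition~\ref{proposition} with no such appeal, and invokes the no-cancellation clause only at the \emph{outer} level: the phrase ``the leading terms of the polynomials $\sum_i\tilde{g}_{i,k}\cdot\sigma_i\circ\tilde{f}_k$ do not cancel out'' refers to the $p$ polynomials $\tilde{p}_1,\ldots,\tilde{p}_p$, and rules out their leading terms annihilating in the sum $\tilde{q}=\sum_k\tilde{p}_k$, so that the continuous limit of $\tilde{q}$ is genuinely $\sum_k p_k$. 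This reading is confirmed by the application in the proof of the next proposition, where the summands on the right-hand side of (\ref{S-poly_1}) correspond to distinct $\tilde{f}_k$ and are checked to have distinct leading terms before Corollary~\ref{corollary} is invoked. Your argument still arrives at the correct conclusion, but the hypothesis is being attributed to a different step from the one the paper intends.
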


\begin{proof}
Denote by $p_k$ the continuous limit of $\sum_i
\tilde{g}_{i,k}\cdot \sigma_i\circ \tilde{f}_k$. Since $p_k\in
[F]$, the no-cancellation assumption implies $\tilde{p}\,\rhd
\sum_{k=1}^p p_k\in [F]$. $\Box$
\end{proof}

Now we consider three difference approximations to system
(\ref{ins}). The first  two of them were constructed
in~\cite{GB'09} by applying the algorithmic approach to
discretization proposed in~\cite{GBM'06} and based on the finite
volume method combined with numerical integration and difference
elimination. The third approximation is obtained by the
conventional discretization what consists of replacing in
(\ref{ins}) the temporal derivatives with the forward differences
and the spatial derivatives with the central differences.

Every difference equation in an approximation must be satisfied in
every node of the grid. As this takes place, one can apply to
every equation a finite number of the forward shift operators
(\ref{shift}) as well as of their inverses (the backward shift
operators) to transform the approximation into an equivalent form.
Because of this, we consider the  difference approximations
generated in \cite{GB'09} in the form which is commonly used for
numerical solving of PDEs. \vskip 0.1cm \noindent FDA 1
(\cite{GB'09}, Eqs. 13) \vskip 0.1cm \noindent
\[
%\label{scheme1}
\left\lbrace
\begin{array}{l}
{e_1}_{j,k}^n := \frac{u_{j+1,k}^n-u_{j-1,k}^n}{2h} + \frac{v_{j,k+1}^n-v_{j,k-1}^n}{2h} = 0, \\[0.2cm]
{e_2}_{j,k}^n := \frac{u_{jk}^{n+1}-u_{jk}^n}{\tau} + \frac{(u_{j+1,k}^n)^2-(u_{j-1,k}^n)^2}{2h}
+ \frac{u_{j,k+1}^n v_{j,k+1}^n -u_{j,k-1}^n v_{j,k-1}^n}{2h} + \frac{p_{j+1,k}^n-p_{j-1,k}^n}{2h} \\[0.2cm]
 \qquad {} - \frac{1}{\mathrm{Re}} \left( \frac{u_{j+2,k}^n-2u_{jk}^n+u_{j-2,k}^n}{4h^2} + \frac{u_{j,k+2}^n-2u_{jk}^n+u_{j,k-2}^n}{4h^2}
\right) = 0,\\[0.2cm] %\label{clas_2}
{e_3}_{j,k}^n :=\frac{v_{jk}^{n+1}-v_{jk}^n}{\tau} + \frac{u_{j+1,k}^nv_{j+1,k}^n-u_{j-1,k}^n v_{j-1,k}^n}{2h}+
\frac{(v_{j,k+1}^n)^2-(v_{j,k-1}^n)^2}{2h} +  \frac{p_{j,k+1}^n-p_{j,k-1}^n}{2h} \\[0.2cm]
 \qquad  {} - \frac{1}{\mathrm{Re}} \left( \frac{v_{j+2,k}^n-2v_{jk}^n+v_{j-2,k}^n}{4h^2} + \frac{v_{j,k+2}^n-2v_{jk}^n+v_{j,k-2}^n}{4h^2}
\right) = 0,\\[0.2cm]
{e_4}_{j,k}^n := \frac{(u_{j+2,k}^n)^2-2(u_{j,k}^n)^2+(u_{j-2,k}^n)^2}{4h^2} +
\frac{(v_{j,k+2}^n)^2-2(v_{j,k}^n)^2+(v_{j,k-2}^n)^2}{4h^2} \\[0.2cm]
\qquad {} + 2\frac{u_{j+1,k+1}^n v_{j+1,k+1}^n- u_{j+1,k-1}^n v_{j+1,k-1}^n-u_{j-1,k+1}^n v_{j-1,k+1}^n+
u_{j-1,k-1}^n v_{j-1,k-1}^n}{4h^2}\\[0.2cm]
 \qquad {} + \frac{p_{j+2,k}^n-2p_{jk}^n+p_{j-2,k}^n}{4h^2}  + \frac{p_{j,k+2}^n-2p_{jk}^n+p_{j,k-2}^n}{4h^2} = 0\,.
\end{array}
\right.
\]
\vskip 0.1cm
\noindent
FDA 2 (\cite{GB'09}, Eqs. 18)
\vskip 0.1cm
\noindent
\[
\left\lbrace
\begin{array}{l}
{e_1}_{j,k}^n := \frac{u_{j+1,k}^n-u_{j-1,k}^n}{2h} + \frac{v_{j,k+1}^n-v_{j,k-1}^n}{2h} = 0, \\[0.2cm]
{e_2}_{j,k}^n := \frac{u_{jk}^{n+1}-u_{jk}^n}{\tau} + \frac{(u_{j+1,k}^n)^2-({u}_{j-1,k}^n)^2}{2h} + \frac{{u}_{j,k+1}^n{v}_{j,k+1}^n-{u}_{j,k-1}^n{v}_{j,k-1}^n}{2h} + \frac{p_{j+1,k}^n-p_{j-1,k}^n}{2h} \\[0.2cm]
 \qquad {} - \frac{1}{\mathrm{Re}} \left( \frac{u_{j+1,k}^n-2u_{jk}^n+u_{j-1,k}^n}{h^2} + \frac{u_{j,k+1}^n-2u_{jk}^n+u_{j,k-1}^n}{h^2}
\right) = 0,\\[0.2cm] %\label{clas_2}
{e_3}_{j,k}^n :=\frac{v_{jk}^{n+1}-v_{jk}^n}{\tau} + \frac{{u}_{j+1,k}^n{v}_{j+1,k}^n-{u}_{j-1,k}^n{v}_{j-1,k}^n}{2h} +  \frac{(v_{j,k+1}^n)^2-(v_{j,k-1}^n)^2}{2h} +  \frac{p_{j,k+1}^n-p_{j,k-1}^n}{2h} \\[0.2cm]
 \qquad  {} - \frac{1}{\mathrm{Re}} \left( \frac{v_{j+1,k}^n-2v_{jk}^n+v_{j-1,k}^n}{h^2} + \frac{v_{j,k+1}^n-2v_{jk}^n+v_{j,k-1}^n}{h^2}
\right) = 0,\\[0.2cm]
{e_4}_{j,k}^n := \frac{(u_{j+1,k}^n)^2-2(u_{j,k}^n)^2+(u_{j-1,k}^n)^2}{h^2} +
\frac{(v_{j,k+1}^n)^2-2(v_{j,k}^n)^2+(v_{j,k-1}^n)^2}{h^2} \\[0.2cm]
\qquad {} + 2\frac{u_{j+1,k+1}^n v_{j+1,k+1}^n- u_{j+1,k-1}^n v_{j+1,k-1}^n-u_{j-1,k+1}^n v_{j-1,k+1}^n+
u_{j-1,k-1}^n v_{j-1,k-1}^n}{4h^2}\\[0.2cm]
 \qquad {} + \frac{p_{j+1,k}^n-2p_{jk}^n+p_{j-1,k}^n}{h^2}  + \frac{p_{j,k+1}^n-2p_{jk}^n+p_{j,k-1}^n}{h^2} = 0\,.
\end{array}
\right.
\]
\vskip 0.1cm
\noindent
FDA 3
\vskip 0.1cm
\noindent
\[
\left\lbrace
\begin{array}{l}
{e_1}_{j,k}^n := \frac{u_{j+1,k}^n-u_{j-1,k}^n}{2h} + \frac{v_{j,k+1}^n-v_{j,k-1}^n}{2h} = 0, \\[0.2cm]
{e_2}_{j,k}^n := \frac{u_{jk}^{n+1}-u_{jk}^n}{\tau} + u_{jk}^n \frac{u_{j+1,k}^n-u_{j-1,k}^n}{2h} + v_{jk}^n \frac{u_{j,k+1}^n-u_{j,k-1}^n}{2h} + \frac{p_{j+1,k}^n-p_{j-1,k}^n}{2h} \\[0.2cm]
 \qquad {} - \frac{1}{\mathrm{Re}} \left( \frac{u_{j+1,k}^n-2u_{jk}^n+u_{j-1,k}^n}{h^2} + \frac{u_{j,k+1}^n-2u_{jk}^n+u_{j,k-1}^n}{h^2}
\right) = 0,\\[0.2cm] %\label{clas_2}
{e_3}_{j,k}^n :=\frac{v_{jk}^{n+1}-v_{jk}^n}{\tau} + u_{jk}^n \frac{v_{j+1,k}^n-v_{j-1,k}^n}{2h} + v_{jk}^n \frac{v_{j,k+1}^n-v_{j,k-1}^n}{2h} +  \frac{p_{j,k+1}^n-p_{j,k-1}^n}{2h} \\[0.2cm]
 \qquad  {} - \frac{1}{\mathrm{Re}} \left( \frac{v_{j+1,k}^n-2v_{jk}^n+v_{j-1,k}^n}{h^2} + \frac{v_{j,k+1}^n-2v_{jk}^n+v_{j,k-1}^n}{h^2}
\right) = 0,\\[0.2cm]
{e_4}_{j,k}^n := \left(\frac{u_{j+1,k}^n-u_{j-1,k}^n}{2h}\right)^2 + 2\frac{v_{j+1,k}^n-v_{j-1,k}^n}{2h} \frac{u_{j,k+1}^n-u_{j,k-1}^n}{2h}
+  \left(\frac{v_{j,k+1}^n-v_{j,k-1}^n}{2h}\right)^2 \\[0.2cm]
 \qquad {} + \frac{p_{j+1,k}^n-2p_{jk}^n+p_{j-1,k}^n}{h^2}  + \frac{p_{j,k+1}^n-2p_{jk}^n+p_{j,k-1}^n}{h^2} = 0
\end{array}
\right.
\]
The difference approximations in the form (\ref{fda}) constructed
in~\cite{GB'09} are obtained from FDA 1,2 by applying the forward
shift operators (\ref{shift}) as follows.
\begin{equation}\label{fda_GB09}
\tilde{F}:=\{\,\sigma\circ {e_1}_{j,k}^n,\sigma^{2}\circ {e_2}_{j,k}^n,\sigma^{2}\circ {e_3}_{j,k}^n,\sigma^{2}
\circ {e_4}_{j,k}^n\,\}\,,\quad \sigma:=\sigma_x\sigma_y\,.
\end{equation}
All three FDAs are $w-$consistent. This can be easily verified by the Taylor expansion of the finite differences in the set
\begin{equation}\label{FDA}
\tilde{F}:=\{{e_1}_{j,k}^n,{e_2}_{j,k}^n,{e_3}_{j,k}^n,{e_4}_{j,k}^n\}
\end{equation}
about the grid point $\{hj,hk,n\tau\}$ when the grid spacings $h$ and $\tau$ go to zero.

To study s-consistency, fix admissible monomial order $\succ$ on
(\ref{monomials}) such that the leading monomials of difference
polynomials in~(\ref{FDA}) read, respectively, as
\begin{equation}\label{LM}
\{\,u_{j+1,k}^n,u_{jk}^{n+1},v_{jk}^{n+1},p_{j+2,k}^n\ \text{for FDA 1 and }p_{j+1,k}^n\ \text{for FDA 2,3}\,\}\,.
\end{equation}
Such monomial order can be easily constructed by extension of the
block (lexdeg) orderly~(cf.~\cite{G'12},\,Remark 2) ranking
$\{\sigma_t\}\{\sigma_x,\sigma_y\}$ with $p\succ u\succ v$.

\begin{proposition}
Among weakly consistent FDAs 1,2, and 3 only FDA 1 is strongly
consistent.
\end{proposition}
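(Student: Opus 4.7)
The plan is to treat the three FDAs separately. For FDA~1 I intend to combine Corollary~\ref{corollary} with Theorem~\ref{th:s-cons}: first check that every generator $\tilde e_i$ is w-consistent with some element of $[F]$, then argue that every element of a reduced standard basis of the difference ideal $[\tilde F]$ admits a no-cancellation representation of the form required by Corollary~\ref{corollary}. For FDA~2 and FDA~3 I plan to exhibit a single explicit element $\tilde q\in[\tilde F]$ whose Taylor limit $q$ lies outside $[F]$, which by Definition~\ref{def-scon} (equivalently by Theorem~\ref{th:s-cons}) rules out s-consistency.

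For FDA~1, Taylor expansion about a grid node immediately yields $\tilde e_i \rhd f_i\in F$ for $i=1,2,3$. The one nontrivial verification concerns the divergence-form generator $\tilde e_4$, whose limit is
\[
p_4 := (u^2)_{xx} + 2(uv)_{xy} + (v^2)_{yy} + p_{xx} + p_{yy}.
\]
A direct computation gives $p_4 - f_4 = (u_x+v_y)^2 + 2u(u_{xx}+v_{xy}) + 2v(u_{xy}+v_{yy}) = f_1^2 + 2u\,\partial_x f_1 + 2v\,\partial_y f_1 \in [f_1]\subset [F]$, so $\tilde e_4 \rhd p_4\in [F]$, and the hypothesis of Corollary~\ref{corollary} is in place. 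The remaining (and main) step for FDA~1 is to show that every element of any reduced standard basis of $[\tilde F]$ admits a cancellation-free representation $\sum \tilde g_{i,k}\cdot \sigma_i\circ \tilde e_k$. Under the ranking yielding the leading monomials (\ref{LM}), the only pair of generators whose leading monomials share an indeterminate is $(\tilde e_1,\tilde e_2)$, producing the single first-generation $S$-polynomial $S(\tilde e_1,\tilde e_2) = \sigma_t\circ \tilde e_1 - \sigma_x\circ \tilde e_2$. I would then track this $S$-polynomial and each of its successive normal-form reductions against $\tilde F$, showing inductively that every intermediate polynomial, and hence every standard-basis element, inherits a no-cancellation representation built from shifts of $\tilde e_1,\ldots,\tilde e_4$. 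Corollary~\ref{corollary} then supplies the required $q\in [F]$ for each such element, and Theorem~\ref{th:s-cons} concludes.

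For FDA~2 and FDA~3 I would build a counterexample from the discrete analogue of the continuum identity $\partial_x f_2+\partial_y f_3 \equiv f_4 \pmod{[f_1]}$. Denoting by $\delta_x,\delta_y$ the central-difference operators in $x$ and $y$, set
\[
\tilde q := \delta_x\circ \tilde e_2 + \delta_y\circ \tilde e_3 - \tilde e_4 \in [\tilde F].
\]
A term-by-term Taylor expansion produces $\tilde q\rhd q$ where $q$ is a specific nonzero differential polynomial bilinear in $u,v$ and their first derivatives; the key subsidiary check is that $q\notin [F]$, which is certified by the involutive Janet reduction noted in the remark after Theorem~\ref{th:s-cons}. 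For FDA~2 this is exactly the obstruction already observed in~\cite{GB'09}; for FDA~3 the generators $\tilde e_2,\tilde e_3,\tilde e_4$ agree with those of FDA~2 and $\tilde e_1$ does not enter $\tilde q$, so the same $q$ arises. Thus $\tilde q\rhd q\notin [F]$, violating s-consistency.

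The principal obstacle will be the FDA~1 case. Because the difference polynomial ring is non-Noetherian, a reduced standard basis of $[\tilde F]$ may be infinite, so no terminating Buchberger-type enumeration of its elements is available. The argument must therefore be structural, showing that no-cancellation representability is preserved through successive $S$-polynomial reductions starting from $\tilde F$; this structural induction is precisely what replaces the explicit \Gr basis computation avoided in~\cite{G'12} and in the present paper.
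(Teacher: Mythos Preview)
Your overall framework for FDA~1 matches the paper's, but the decisive step is missing. The paper does not argue by induction through successive normal-form reductions; it writes down one explicit identity
\[
S(\tilde e_1,\tilde e_2)\;=\;-\frac{{\tilde e_1}{}_{j,k}^{\,n}}{\tau}+\frac{{\tilde e_2}{}_{j-1,k}^{\,n}}{2h}+\frac{{\tilde e_3}{}_{j,k+1}^{\,n}+{\tilde e_3}{}_{j,k-1}^{\,n}}{2h}+\frac{1}{\mathrm{Re}}\bigl(\cdots\tilde e_1\cdots\bigr)-{\tilde e_4}{}_{j,k}^{\,n},
\]
whose summands have pairwise distinct leading terms. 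This single syzygy, together with the standard rewriting argument (\cite{CLO'07}, Ch.~2, \S6, Th.~6), lets one put \emph{every} element of $[\tilde F]$ into no-cancellation form and then invoke Corollary~\ref{corollary} directly. Your proposed ``structural induction'' would have to manufacture exactly this identity at its first step; you give no reason why the reduction of $S(\tilde e_1,\tilde e_2)$ modulo $\tilde F$ should yield a no-cancellation expression, so the induction never gets started.

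Your treatment of FDAs~2 and~3 contains a genuine error. The element $\tilde q=\delta_x\circ\tilde e_2+\delta_y\circ\tilde e_3-\tilde e_4$ has continuous limit $\partial_x f_2+\partial_y f_3-f_4$, and you yourself quote the continuum identity $\partial_x f_2+\partial_y f_3\equiv f_4\pmod{[f_1]}$. Hence $\tilde q\rhd q$ with $q=\partial_t f_1+u\,\partial_x f_1+v\,\partial_y f_1-\tfrac{1}{\mathrm{Re}}\Delta f_1\in[f_1]\subset[F]$: this $\tilde q$ is \emph{not} a witness of s-inconsistency, and no Janet reduction will certify $q\notin[F]$. (The limit is not ``bilinear in $u,v$ and their first derivatives''; it contains $u_{tx}$, $uu_{xx}$, $u_{xxx}$, etc.) The paper's counterexample is different and necessarily involves $\tilde e_1$: it takes the same $S$-polynomial $S(\tilde e_1,\tilde e_2)$, subtracts from it the analogue of the identity above built with FDA~2's (resp.\ FDA~3's) generators, and shows that the residual $\Delta$ has continuous limit the fourth-order PDE
\[
2vv_{yyyy}+8v_yv_{yyy}+6v_{yy}^2+2uu_{xxxx}+8u_xu_{xxx}+6u_{xx}^2+p_{yyyy}+p_{xxxx}=0,
\]
which lies outside $[F]$. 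Avoiding $\tilde e_1$ as you do cannot work: the obstruction arises precisely from how the discrete continuity equation interacts with the discrete momentum equation under the shift $\sigma_t$.
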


\begin{proof}
From the leading monomial set (\ref{LM}) and the structure of FDAs
it follows that every of the approximations has the only
nontrivial $S-$polynomial
\begin{equation}\label{S-poly}
S({e_1}_{j,k}^n, {e_2}_{j,k}^n):= \dfrac{{e_1}_{j,k}^{n+1}}{\tau} - \dfrac{{e_2}_{j+1,k}^n}{2h}\,.
\end{equation}
In the case of FDA 1, the S-polynomial (\ref{S-poly}) is expressed in terms of the difference polynomials in (\ref{FDA}) as follows
\begin{multline} \label{S-poly_1}
S({e_1}_{j,k}^n, {e_2}_{j,k}^n)=\dfrac{{e_1}_{j,k}^{n+1}}{\tau}-\dfrac{{e_2}_{j-1,k}^n}{2h}+\dfrac{{e_3}_{j,k+1}^n -{e_3}_{j,k-1}^n}{2h} \\
{} + \frac{1}{\mathrm{Re}} \left( \frac{{e_1}_{j+2,k}^n-2{e_1}_{jk}^n+{e_1}_{j-2,k}^n}{4h^2} + \frac{{e_1}_{j,k+2}^n-2{e_1}_{jk}^n+{e_1}_{j,k-2}^n}{4h^2}\right)
-{e_4}_{j,k}^n\,.
\end{multline}
The summands in the right-hand side of (\ref{S-poly_1}) have
distinct  leading terms, and thus cannot be cancelled out.
Furthermore, every summand implies a differential consequence of
the corresponding equation in the system (\ref{ins}). Hence, by
Corollary~\ref{corollary}, the $S-$polynomial (\ref{S-poly})
implies, in the continuous limit, an algebraic-differential
consequence of (\ref{ins}).

Consider now an element $\tilde{p}$ of the form (\ref{member}) in the ideal
$[\tilde{F}]\subset {\cR}$ generated by the difference polynomials
appearing in FDA 1. If cancellation occurs in $\tilde{p}$ among
the leading terms, then the sum in (\ref{member}) can be rewritten
by means of the right-hand side of (\ref{S-poly_1}) so that
cancellation of the leading terms cannot occur
(cf.\,\cite{CLO'07}, Ch.2, \S\,6, Th.6). Furthermore, the $S-$polynomial (\ref{S-poly_1}) reduces to zero modulo (\ref{FDA}) if one applies appropriate backshift operators to (\ref{FDA}). Consequently, $\tilde{p}$
implies an element in $[F]$. This  proves $s-$consistency of FDA 1.

\noindent
In the case of FDAs 2 and 3, the corresponding S-polynomial in
addition to an expression similar to the right-hand side of
(\ref{S-poly_1}), i.e. linear in $e_1,e_2,e_3,e_4$, has extra polynomial
additive which we denote by $\Delta_2$ and $\Delta_3$,
respectively. In the continuous limit, $\Delta_2=0$ implies
\[
2 v v_{yyyy}+8 v_y v_{yyy}+6 v_{yy}^2+2 u u_{xxxx}+8 u_x u_{xxx}+6 u_{xx}^2+p_{yyyy}+p_{xxxx}=0\,.
\]
$\Delta_3$ is given by
\[
\Delta_3:=- u_{j,k}^n \dfrac{{e_1}_{j+1,k}^n +{e_1}_{j-1,k}^n}{2h} - v_{j,k}^n \dfrac{{e_1}_{j,k+1}^n +{e_1}_{j,k-1}^n}{2h}+\Delta'_3\,.
\]
Clearly, the explicitly written terms of $\Delta_3$ in the
continuous limit imply an element in the differential ideal $[F]$.
Further, $\Delta'_3=0$ implies PDE
\[
2 v v_{yyyy}+8 v_y v_{yyy}+6 v_{yy}^2+2 u u_{xxxx}+8 u_x u_{xxx}+6 u_{xx}^2+p_{yyyy}+p_{xxxx}=0\,.
\]
The both of obtained differential equations do not follow from the
Navier--Stokes equations that can easily be  verified\footnote{The
Maple library implementing the differential Thomas
decomposition~\cite{DThomas} is capable of making Janet reduction
of a differential polynomial modulo a nonlinear system of PDEs.
The library is free download from {\tt
http://wwwb.math.rwth-aachen.de/thomasdecomposition/index.php}} by
using the Janet reduction of their left-hand sides modulo the
system of polynomials in (\ref{ins}). Therefore, FDAs 2 and 3 are
not strongly consistent. $\Box$
\end{proof}
>From Theorem \ref{th:s-cons}, we immediately conclude:
\begin{corollary}
    A difference standard basis $G$ of the ideal $[\tilde{F}]$ generated by the set (\ref{FDA})
    for FDA 1 satisfies the condition (\ref{cons-gb}).
\end{corollary}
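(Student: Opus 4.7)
The plan is essentially a one-line deduction: the preceding Proposition establishes that FDA 1 is strongly consistent, and Theorem \ref{th:s-cons} asserts that s-consistency of a difference approximation is \emph{equivalent} to the condition (\ref{cons-gb}) being satisfied by a (reduced) standard basis $G$ of the associated difference ideal $[\tilde{F}]$. So I would just invoke the forward direction of this biconditional.

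Concretely, I would first recall the Proposition: FDA 1 is s-consistent, i.e. for every $\tilde f \in \llbracket \tilde F \rrbracket$ there exists $f \in [F]$ with $\tilde f \rhd f$ (Definition \ref{def-scon}). Then I would cite Theorem \ref{th:s-cons} to conclude that any (reduced) standard basis $G$ of $[\tilde F]$ automatically satisfies $(\forall g \in G)(\exists f \in [F])\,[g \rhd f]$, which is precisely condition (\ref{cons-gb}). No further computation with $G$ itself is required; in particular, we do not need to actually construct $G$ (which, as remarked earlier in the paper, need not even be finite in the nonlinear case).

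There is no real obstacle here because all the work has already been done in proving the Proposition. The only thing worth flagging is a minor hygiene point: Theorem \ref{th:s-cons} is stated for ``a (reduced) standard basis'', so the Corollary should be read as asserting that \emph{every} such $G$ satisfies (\ref{cons-gb}) — this is immediate from the statement of the theorem since it quantifies existentially over $G$ only in one direction, while the characterization itself is basis-independent (membership in $[\tilde F]$ and the implication relation $\rhd$ do not depend on the choice of basis).
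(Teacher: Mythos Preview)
Your proposal is correct and matches the paper's own argument exactly: the paper introduces the Corollary with the phrase ``From Theorem~\ref{th:s-cons}, we immediately conclude,'' i.e., it is presented as an immediate consequence of the s-consistency of FDA~1 established in the preceding Proposition together with the equivalence in Theorem~\ref{th:s-cons}. No additional ideas are needed, and your remark about not having to construct $G$ explicitly is also in line with the paper's discussion.
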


\section{Numerical Comparison}
In this section, we perform some numerical tests for experimental
comparison of the three FDAs of the  previous section. To this
aim, we suppose that the Navier--Stokes system (\ref{ins}) is
defined for $t\ge 0$ in the square domain $\Omega = [0, \, \pi]
\times [0, \, \pi]$ and provide initial conditions for $t=0$ and
boundary conditions for $t>0$ and $(x,y) \in \partial \Omega$.
Initial and boundary conditions are defined according to
(\ref{exact}). Moreover, since we are essentially interested in
the behavior of the different space discretizations used by the
FDAs, any required additional values near the boundary ones are
supposed to be known exactly.

Let $[0, \, \pi] \times [0, \, \pi]$ be discretized in the
$(x,y)$-directions by means of the $(m+2)^2$ equispaced points
$x_j = jh$ and $y_k = kh$, for $j,k=0,\dots m+1$, and $h =
\pi/(m+1)$. Considering difference equations (\ref{FDA}) we
observe that, starting from the initial conditions, the second and
the third equations give explicit formulae to compute
$u_{jk}^{n+1}$ and $v_{jk}^{n+1}$ for $j,k=1,\dots,m$,
respectively. Vice versa, the fourth equation may be used to
derive a $m^2 \times m^2$ linear system that computes the unknowns
$p_{jk}^{n+1}$ for $j,k=1,\dots,m$. In doing so, the first
equation is unnecessary to evaluate the unknowns but may be used
to validate the obtained solution. This procedure may be iterated
for $n=0, 1, \dots, N$ being $t_f = N \tau$ the end point of the
time interval. Since in our experiments we are essentially
interested in comparing different discretizations of $u$, $v$, and
$p$ on the space domain, the value of the time step $\tau$ was
always chosen in order to provide stability.

\noindent
In the following figures, we compare the error behavior in $t_f = 1$
given by the three methods for different values of the Reynolds number
$\mathrm{Re}$. Error is computed by means of the formula\\[-0.1cm]
\[
e_{g} = \max_{j,k} \frac{|g_{j,k}^N-g(x_j,y_k,t_f)|}{1+|g(x_j,y_k,t_f)|}\,.
\]
where $g\in\{u,v,p\}$ and $g(x,y,t)$ belongs to the exact solution~\cite{Chorin} to (\ref{ins})
\begin{eqnarray}
\left\lbrace
\begin{array}{l}
u :=-e^{-2t/\mathrm{Re}} \cos(x) \sin(y)\,,\\
v := e^{-2t/\mathrm{Re}} \sin(x) \cos(y)\,,\\
p :=-e^{-4t/\mathrm{Re}} (\cos(2x)+\cos(2y))/{4}\,.
\end{array}
\right.\label{exact}
\end{eqnarray}

Figure~1 shows the numerical results obtained for (\ref{exact})
with the Reynolds number set to $\mathrm{Re}=10^5$. Each subplot
represents the error of a difference approximation for several
values of $m$ and $N=10$. The three lines in each subplot
represent the error in $u,v$, and $p$. Even if the behavior of the
three schemes is essentially the same, for $m=50$, the scheme
based on FDA 1 is able to obtain the solution with an error less
than $10^{-7}$ while the schemes based on FDAs 2 and 3 do not
obtain an approximation of the solution with an error less than
$10^{-4}$.

\begin{figure} \label{fig1}
\begin{center}
\hspace*{-15mm} \includegraphics[width=1.2\textwidth]{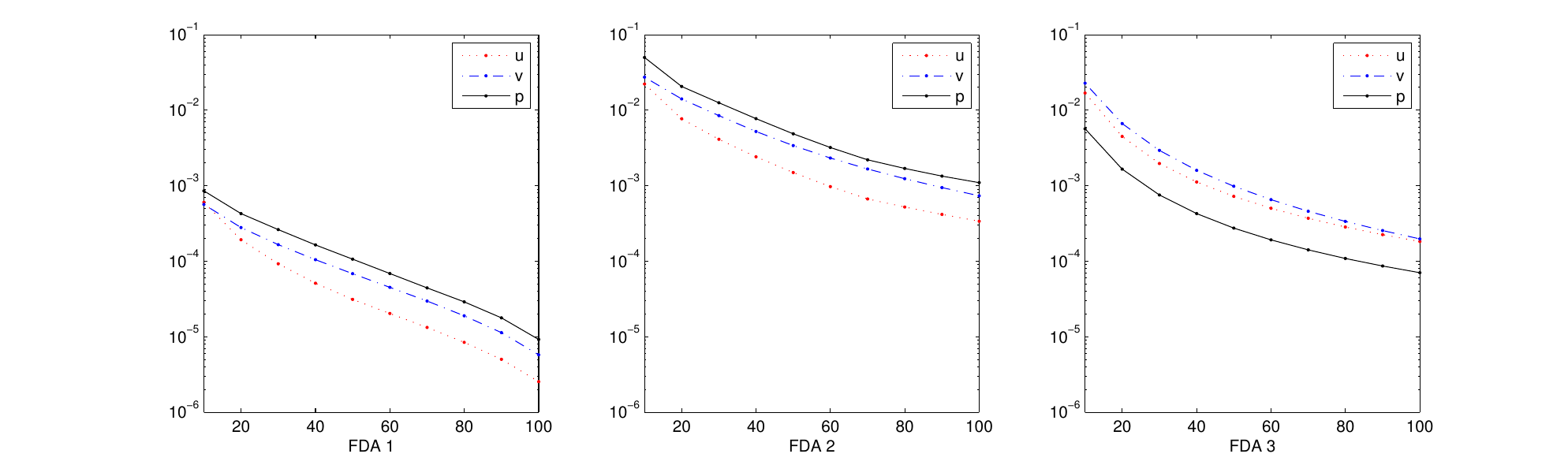}
\end{center}
\caption{Relative error in (\ref{exact}) for FDA 1, FDA 2 and FDA 3 with $N=10$, $t_f=1$, $\mathrm{Re}=10^5$ and varying $m$ from 5 to 50}
\end{figure}

Figure~2 shows the value of the first difference polynomial
${e_1}_{j,k}^n$ in (\ref{FDA}) for the three FDAs and for growing
$m$ obtained by the numerical solution. It is clear that the
discretizations FDA 2 and 3 can not get along without the
continuity equation $f_1$ in the Navier-Stokes system (\ref{ins}).

Figure~3 shows the results obtained for problem (\ref{exact}) with
the Reynolds number set to $\mathrm{Re}=10^2$. Again each subplot
represents the error of a difference scheme and the three lines
inside each subplot represent the error in $u,v$, and $p$,
respectively, for several values of $m$ and $N=40$. Similar
considerations to the previous example may be done: the scheme
based on FDA 1 works much better than the others and the scheme
with FDA 2 is the worst.

\begin{figure} \label{fig2}
\begin{center}
\hspace*{-15mm} \includegraphics[width=.5\textwidth]{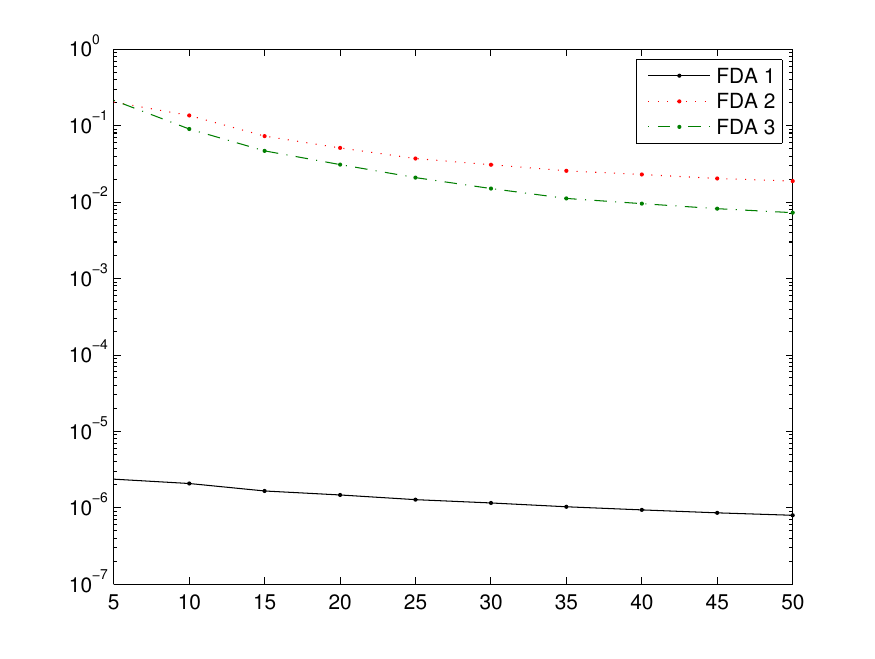}
\end{center}
\caption{Computed value of $f_1$ in (\ref{ins}) for FDA 1 (solid
line),  FDA 2 (dashed line) and FDA 3 (dash-dotted line) with
$N=10$, $t_f=1$, $Re=10^5$ and varying $m$ from 5 to 50}
\end{figure}

\begin{figure} \label{fig3}
\begin{center}
\hspace*{-15mm}
\includegraphics[width=1.2\textwidth]{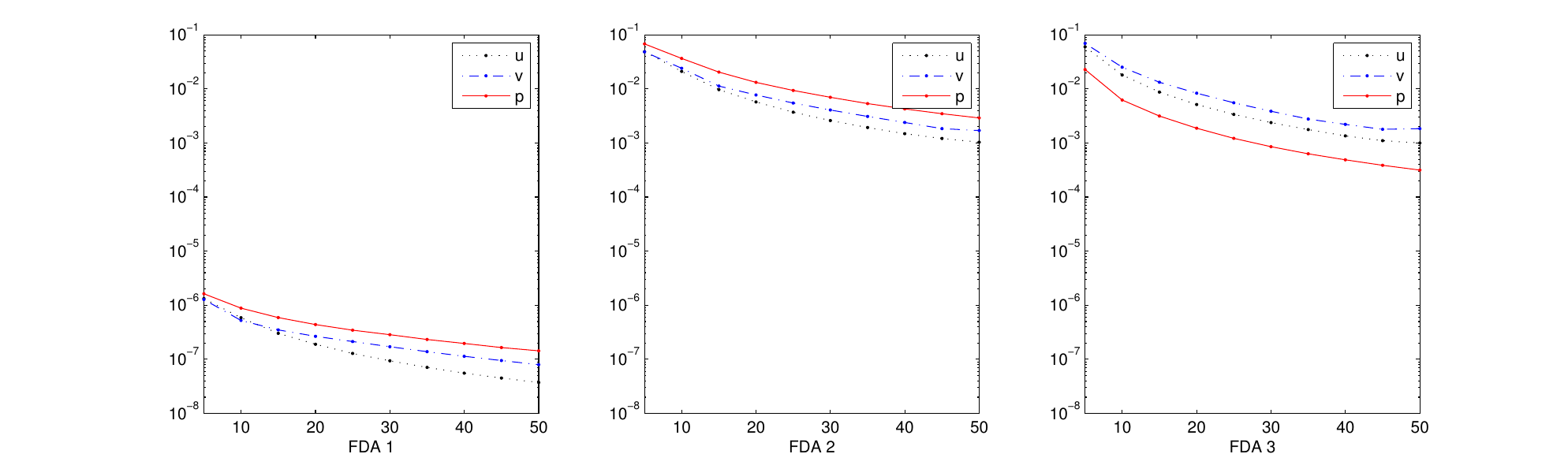}
\end{center}
\caption{Computed errors in $u,v$ and $p$ for FDA 1 (left), 2
(middle) and 3 (right): $N=40$, $t_f=1$, $Re=10^2$ and varying $m$
from 10 to 100}
\end{figure}

\noindent We conclude showing in Figure~4 the computed error in
$t_f=1$ using the $s-$consistent FDA 1 applied to the
problem~(\ref{exact}) ($\mathrm{Re}=10^2$) with $N=40$ and
$m=100$. Larger errors are near the boundaries, and $u$ and $v$
seem to be better approximated than $p$.

\begin{figure} \label{fig4}
\begin{center}
\hspace*{-15mm}
\includegraphics[width=1.2\textwidth]{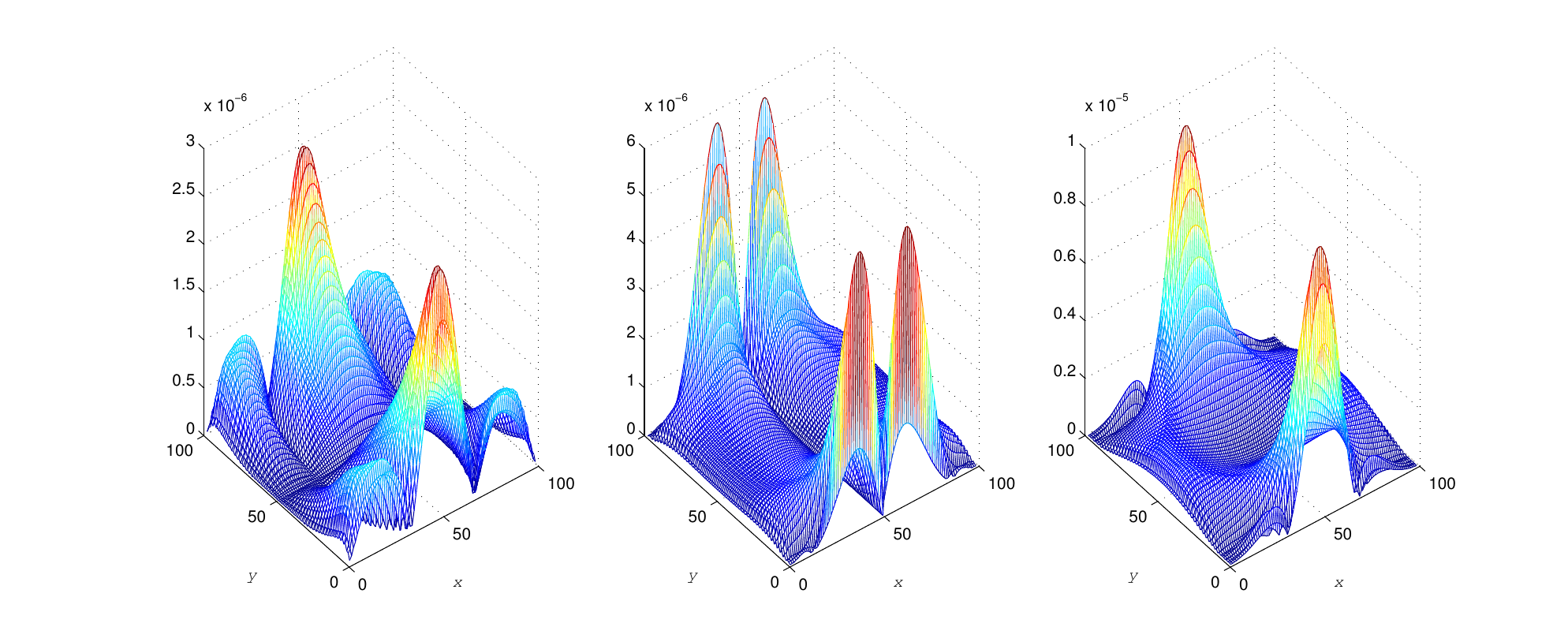}
\end{center}
\caption{Computed error with FDA 1 ($u$, $v$ and $p$,
respectively): $N=40$, $t_f=1$, $Re=10^2$ and $m=100$}
\end{figure}

\section{Conclusion}
As it has been already demonstrated in~\cite{GR'10} for
overdetermined systems of linear PDEs, it may be highly nontrivial
to construct strongly consistent difference approximations. In the
given paper, we have demonstrated that the demands of
s-consistency impose strong limitations on the finite difference
approximations to the nonlinear system of Navier--Stokes
equations. These limitations proceed from the fact that
$s-$consistent approximations inherit at the discrete level all
basic algebraic properties of the initial differential equations.

It turned out that among two distinctive approximations generated
in~\cite{GBM'06} (by applying the same algorithmic technique with
different choice of numerical integration method), the one with a
$5\times 5$ stencil (FDA 1) is strongly consistent whereas the
other one with a $3\times 3$ stencil (FDA 2) is not. This result
is at variance with universally accepted opinion that
discretization with a more compact stencil is numerically
favoured. One more discretization with a $3\times 3$ stencil (FDA
3), obtained from the differential Navier--Stokes equations by the
replacement of spatial derivatives with the central differences
and of temporal derivatives with the forward differences, also
failed to be $s-$consistent. As this takes place, our computer
experimentation revealed much better numerical behavior of the
$s-$consistent approximation in comparison with the considered
s-inconsistent ones. The question of existence of $s-$consistent FDA
to (\ref{ins}) with a $3\times 3$ stencil is open.

Unlike the linear case~\cite{GR'10}, given a difference
approximation on a grid with equisized grid spacings, one cannot
fully algorithmically check its s-consistency. This is owing to
non-noetherianity of difference polynomial rings that may lead to
non-existence of a finite difference \Gr basis for the ideal
generated by the approximation. And even with the existence of a
\Gr basis, its construction and algorithmic verification of
s-consistency may be very hard. For example, by using experimental
implementation in Maple~\cite{L'12} of the algorithm of
papers~\cite{L'12,GLS'13}\footnote{To our knowledge, it is the
only software available for computation of difference \Gr bases
for the nonlinear case.}, many finite \Gr bases have been
constructed for the $s-$consistent approximation FDA-1 and for many
different monomial orders. In doing so, the smallest obtained
basis consists of 5 different polynomials, and one of the
polynomials has 404 terms. In distinction to those rather tedious
computations, the verification of s-consistency for FDA 1 and
s-inconsistency for the other two was done by analysing  the only
S-polynomial and required much less symbolic computation.

It should be noted that in our paper, we use the collocated
arrangement of the dependent variables $u,v$, and $p$ in the
system (\ref{ins}) that often gives rise to oscillations of the
variables (cf.~\cite{FP'96}) and  makes  impossible convergence of
numerical solutions. Our experiments presented in Section 5
demonstrate no spurious oscillations of the numerical solution.
This can be considered as a significant positive property of the
obtained FDAs.

\section{Acknowledgements}
The authors thank the anonymous referees for constructive comments
and recommendations which helped to improve the readability and
quality of the paper. The contribution of the second and third
authors (Yu.B. and V.G.) was partially supported by grant
13-01-00668 from the Russian Foundation for Basic Research and by
grant 3802.2012.2 from the Ministry of Education and Science of
the Russian Federation.


\begin{thebibliography}{99}


\bibitem{Seiler'10} Seiler, W.M.: Involution: The Formal Theory of Differential Equations and its Applications in Computer Algebra. Algorithms and Computation in Mathematics 24, Springer, Heidelberg (2010)

\bibitem{Pozrikidis:2001} Pozrikidis, C.:
 Fluid Dynamics: Theory, Computation and Numerical Simulation. Kluwer, Norwell (2001)

  \bibitem{Gresho:1987} Gresho, P.M., Sani, R.L.: On pressure boundary conditions for the
incompressible Navier--Stokes Equations. Int. J. Numer. Meth. 7,
1111--1145 (1987)


 \bibitem{Samarskii'01} Samarskii, A.A.: Theory of Difference Schemes. Marcel Dekker, New York (2001)

\bibitem{Str'04} Strikwerda, J.C.: Finite Difference Schemes and Partial Differential Equations, 2nd Edition. SIAM, Philadelphia (2004)

\bibitem{GR'10} Gerdt, V.P., Robertz, D.: Consistency of Finite Difference Approximations for Linear
PDE Systems and its Algorithmic Verification. In: Watt, S.M. (ed.)
Proc. ISSAC 2010, pp.\ 53--59, Association for Computing
Machinery, New York (2010)

\bibitem{Levin'08} Levin, A.: Difference Algebra. Algebra and Applications, vol. 8. Springer (2008)

\bibitem{Hubert'01} Hubert, E.: Notes on Triangular Sets and Triangulation-Decomposition Algorithms.
II: Differential Systems. In: Winkler, F., Langer, U. (eds.) SNSC
2001, LNCS, vol. 2630, pp.\ 40--87. Springer, Berlin (2001)

\bibitem{L'12} La Scala, R.: \Gr Bases and Gradings for Partial Difference Ideals. To appear in Math. Comput. (2013) arXiv:math.RA/1112.2065

\bibitem{GLS'13} Gerdt, V., La Scala, R.: Noetherian Quotient of the Algebra
of Partial Difference Polynomials and \Gr Bases of Symmetric Ideals.  arXiv:math.AC/1304.7967

\bibitem{GB'09} Gerdt, V.P., Blinkov, Yu.A.: Involution and Difference Schemes
for the Navier--Stokes Equations. In: Gerdt, V.P., Mayr, E.W.,
Vorozhtsov, E.V. (eds.) CASC 2009, LNCS, vol. 5743, pp.\ 94--105.
Springer, Berlin (2009)

\bibitem{GBM'06} Gerdt, V.P., Blinkov, Yu.A., Mozzhilkin, V.V.:  \Gr Bases and Generation of
Difference Schemes for Partial Differential Equations. SIGMA 2, 051 (2006) arXiv:math.RA/0605334

\bibitem{G'12} Gerdt, V.P.: {\em Consistency Analysis of Finite Difference Approximations to PDE Systems}.
In: G.Adam, J.Bu\v{s}a, M.Hnati\v{c} (Eds.) MMCP 2011, LNCS, vol.
7125, pp.\ 28--42 (2012) arXiv:math.AP/1107.4269

\bibitem{Chorin} Pearson, C.E.: A computational method for time dependent two dimensional incompressible
viscous flow problems. Sperry-Rand Research Center, Sudbury, Mass.,
Report No. SRRC-RR-64-17 (1964) \\
Chorin, A.J.: The Numerical Solution of the Navier--Stokes
Equations for an Incompressible Fluid. AEC Research and
Development Report, NYO-1480-82, New York University, New York
(1967)


\bibitem{CLO'07} Cox, D., Little, J., O'Shea, D.: Ideals, Varieties and Algorithms. An
 Introduction to Computational Algebraic Geometry and Commutative Algebra. 3nd Edition.
 Springer, New York (2007)

 \bibitem{Ollivier'90} Ollivier, F.: Standard Bases of Differential Ideals. In: Sakata, S. (ed.) AAECC-8.
 LNCS, vol. 508, pp.\ 304--321.  Springer, London (1990)

 \bibitem{Trushin'11} Trushin, D.V.: Difference Nullstellensatz. arXiv:math.AC/0908.3865

\bibitem{DThomas} B\"{a}chler, T., Gerdt, V, Lange-Hegermann, M., Robertz, D.: Algebraic Thomas
Decomposition of Algebraic and Differential Systems.  J. Symb. Comput. 47, 1233--1266 (2012) arXiv:math.AC/1108.0817


\bibitem{FP'96} Ferziger, J.H., Peric, M.: Computational Methods for Fluid Dynamics, Springer-Verlag, Berlin, Heidelberg
(1996)


\end{thebibliography}
\end{document}